\newtheoremstyle{noperiod}
{3pt}
{3pt}
{\em}
{}
{\bf}
{}
{.2em}
{}
\newtheorem{thm}{Theorem}[section]
\newtheorem{mainThm}{Theorem}
\newtheorem{lemma}{Lemma}[section]
\newtheorem{corlemma}{Corollary}[lemma]
\newtheorem*{cor*}{Corollary}
\newtheorem{defn}{Definition}[section]
\newtheorem*{defn*}{Definition}
\newtheorem{fact}{Fact}[section]
\newtheorem{claimlemma}{Claim}[lemma]
\newtheorem*{prop*}{Proposition}
\newtheorem*{thm*}{Theorem}
\newtheorem*{lemma*}{Lemma}
\newtheorem{rmk}{Remark}[section]
\newtheorem{ex}{Example}[section]
\newtheorem{prob}{Problem}[section]
\theoremstyle{noperiod}
\theoremstyle{noperiod}
\theoremstyle{noperiod}
\theoremstyle{noperiod}
\theoremstyle{noperiod}
\theoremstyle{definition}
\theoremstyle{definition}
\DeclareMathOperator{\Aut}{Aut}
\newcommand{\aclass}{\mathcal{A}^{\delta}_{K_1,K_2,C,C',\mathcal{S}}}
\newcommand{\leqP}{\leq^{+}}
\title{The Algebra of an Age for Metrically Homogeneous Graphs of Generic Type}
\author{Rebecca Coulson}
\begin{document}

\maketitle

\begin{abstract}
Metrically homogeneous graphs are connected graphs which, when endowed with the path metric, are homogeneous as metric spaces. Here we consider a class of countable metrically homogeneous graphs. The algebra of an age is a concept introduced by Cameron in \cite{Cam_Orbs2} and is closely connected to the profile of the automorphism group of the associated countable structure. Cameron in \cite{Cam_AA} provided sufficient structural conditions on the age of $\aleph_0$-categorical countable homogeneous structures for showing that the algebra of the age is a polynomial algebra. In this paper, we use Cameron's result to deduce that the algebra of the age of certain metrically homogeneous graphs of generic type are polynomial algebras, typically in infinitely many variables.
\end{abstract}



\section{Introduction}
A group is said to be \emph{oligomorphic} if the number of orbits of $G$ acting on $n$-element sets of $S$ is finite for every $n$. Oligomorphic groups are of interest to model theorists, as they are precisely the automorphism groups of $\aleph_0$-categorical countable structures in countable languages. There is a very rich theory concerning the so-called \emph{profile} of an oligomorphic permutation group. The profile is the function giving the number of orbits of the group on (unordered) sets of order $n$; thus, if a permutation group is oligomorphic, then the profile is finite everywhere. From a model theoretic perspective, the profile function $F^*_n(T)$ of an oligomorphic $G$, for which $G = \Aut(M)$ where $M$ is the unique countable model of $T$, counts the number of $n$-types over $T$, up to a permutation of the entries.

Considerable attention has been paid to the asymptotic behavior of the profile function, and a more precise description has been sought for the possibilities in the polynomially bounded case. A powerful algebraic tool was introduced by Peter Cameron in \cite{Cam_Orbs2}, a graded $\mathbb{Q}$-algebra $\mathcal{A}^G$ (where $G$ is the permutation group in question) whose Hilbert function is the generating function for the profile. The definition of this algebra is given below. It may be described succinctly as the ring of $G$-invariants in the incidence algebra of the partially ordered set of finite subsets of the domain $\Gamma$ on which $G$ acts.

The following questions are asked about this algebra.
\begin{itemize}
    \item When is it an integral domain?
    \item When is it finitely generated?
    \item When is it a polynomial ring over $\mathbb{Q}$ (typically with infinitely many generators)?
\end{itemize}
Moreover, relationships between algebraic properties of $\mathcal{A}^G$, group theoretic properties of $G$, and combinatorial properties of the profile are examined. Already in \cite{Cam_Orbs2}, Cameron conjectured that the algebra $\mathcal{A}^G$ is an integral domain if and only if $G$ has no finite orbits. This was proved by Pouzet \cite{Pou_OrbIntDom}. In the case in which the growth rate of the profile is polynomially bounded, MacPherson asked in \cite{Mac_Growth} whether the associated algebra must be finitely generated, and in \cite{Cam_OligPer}, Cameron asked whether the profile must then be asymptotically polynomial. A positive solution to these questions, and considerably more, has been announced by Falque and Thiery \cite{FT}.

In the case that $G$ is the automorphism group of a homogeneous structure $\Gamma$ in a finite relational language, Cameron showed in \cite{Cam_AA} that relatively straightforward structural hypotheses on the age of $\Gamma$ will yield that $\mathcal{A}^G$ is a polynomial algebra (Theorem \ref{lemma:Cam:polyAlg}, below).

These structural properties in the context of metrically homogeneous graphs of generic type were explored by the author in \cite{C-TopDynRamThe}, and more generally by Aranda et. al in \cite{Czechs}, for reasons unrelated to the algebra of an age. We use these properties here to show that the algebra of the age of certain metrically homogeneous graphs of generic type are indeed polynomial algebras.

Specifically, we will prove the following.
\begin{mainThm}\label{thm:genPolyAlg}
Let $(\delta,K_1,K_2,C,C',\mathcal{S})$ be an admissible parameter sequence with $K_1$ and $\delta$ finite, and let $\Gamma$ be the corresponding metrically homogeneous graph, with automorphism group $G$. If $C = 2\delta +1$, suppose that $\delta$ is even. Then the associated algebra $\mathcal{A}^G$ is a polynomial algebra in infinitely many variables.
\end{mainThm}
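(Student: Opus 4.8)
The plan is to deduce the result directly from Cameron's criterion, Theorem~\ref{lemma:Cam:polyAlg}, by verifying its structural hypotheses for the age $\aclass$ of $\Gamma$. The ambient hypotheses are immediate: since $K_1$ and $\delta$ are finite the diameter is bounded, so $\Gamma$ is a homogeneous structure in the finite relational language with one binary relation per distance $1,\dots,\delta$, whence $G=\Aut(\Gamma)$ is oligomorphic and $\Gamma$ is $\aleph_0$-categorical. The real work is to check the two combinatorial conditions that drive Cameron's theorem: that $\aclass$ has the strong amalgamation property, and that there is a canonical commutative, associative ``free superposition'' operation $\oplus$ on isomorphism types in $\aclass$, with the empty structure as identity, under which (i) $\aclass$ is closed, and (ii) every member factors uniquely as an $\oplus$-sum of $\oplus$-indecomposable members. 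First I would record strong amalgamation for $\aclass$: for admissible parameter sequences of generic type this is part of the analysis carried out in \cite{C-TopDynRamThe} and, in greater generality, in \cite{Czechs}, so I would cite it; it is exactly the no-algebraicity condition, which for the (vertex-transitive, infinite) graph $\Gamma$ guarantees that $G$ has no finite orbits, whence by Pouzet's theorem \cite{Pou_OrbIntDom} $\mathcal{A}^G$ is already an integral domain, consistent with (and necessary for) the desired conclusion.

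The heart of the argument is the construction of $\oplus$ together with the unique-factorization statement. The natural candidate superposes two finite metric spaces $A$ and $B$ on disjoint point sets by assigning every cross pair the largest distance the forbidden-triangle constraints permit; generically this is the diameter $\delta$. One must verify that the resulting completion introduces no forbidden triangle: every new triangle has the shape $(t,m,m)$ with $t$ an internal distance of $A$ or of $B$ and $m$ the chosen cross distance, so the verification reduces to checking the perimeter and parity conditions encoded by $C,C',K_1,K_2$ and $\mathcal{S}$ against all admissible internal distances $t$. This is precisely the step where the hypothesis on $\delta$ enters. When $C=2\delta+1$ the triangle $(1,\delta,\delta)$ has odd perimeter $2\delta+1=C$ and is therefore forbidden, which forces a parity constraint on admissible cross distances and hence a bipartite-type invariant on the age; requiring $\delta$ to be even is what makes the assignment of cross distances globally consistent, so that $\oplus$ is well defined, associative, and keeps one inside $\aclass$.

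I expect this closure-under-$\oplus$ verification — together with showing that the relation ``$\oplus$-indecomposable'' is well behaved enough to yield \emph{unique} factorization, i.e. that no member of $\aclass$ splits into $\oplus$-summands in two inequivalent ways — to be the main obstacle. The supporting combinatorics is essentially what was established, for unrelated purposes, in \cite{C-TopDynRamThe} and \cite{Czechs}, and I would invoke those completion/amalgamation results rather than reprove them, recasting them in the language Cameron's theorem requires.

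With $\oplus$ in hand and its hypotheses verified, Theorem~\ref{lemma:Cam:polyAlg} yields that $\mathcal{A}^G$ is the polynomial algebra over $\mathbb{Q}$ generated by the $\oplus$-indecomposable isomorphism types in $\aclass$. Finally, to see that there are infinitely many generators it suffices to exhibit infinitely many pairwise non-isomorphic indecomposable members: any structure that cannot be separated into two nonempty parts all of whose cross distances equal the canonical value is indecomposable, and geodesic paths of every length furnish such structures. Hence $\mathcal{A}^G$ is a polynomial algebra in infinitely many variables, as claimed.
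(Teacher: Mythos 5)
Your outline follows the paper's route --- a generalized disjoint sum $+_M$ filling in a single cross distance, Cameron's criterion, and a count of indecomposables --- but the proposal has a genuine gap exactly where the mathematical content of the theorem lies: the choice and existence of the cross distance $M$. You assert that the canonical value is ``the largest distance the constraints permit; generically this is the diameter $\delta$.'' That is not correct. A triangle of type $(M,M,i)$ with $i\le\delta$ must satisfy the perimeter bound $2M+i<C$ and the odd-perimeter bounds governed by $K_1,K_2$, which forces
$$\max(K_1,\delta/2)\ \le\ M\ \le\ \min\bigl(K_2,(C-\delta-1)/2\bigr).$$
Since $C\le 3\delta+2$ in the catalog, $M=\delta$ is admissible only in degenerate situations; the paper instead takes the \emph{minimum} value $M=\max(K_1,\lceil\delta/2\rceil)$ and then must verify, by a case analysis of the admissibility conditions in Table \ref{Table:admissible} (cases (b) and (c)), that this $M$ also satisfies the upper bounds $2K_1+\delta+1\le C$ and $\lceil\delta/2\rceil\le K_2$ --- this is Lemma \ref{lemma:plusMclosedAndParams}, and it is the crux of the proof. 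Your proposal skips this entirely, and in particular never shows that \emph{any} valid $M$ exists for an arbitrary admissible sequence with $K_1,\delta$ finite. The role of ``$\delta$ even when $C=2\delta+1$'' is also simpler than your bipartite-invariant explanation: in that case the two bounds pinch to $M=\delta/2$, which must be an integer. Relatedly, you never address the Henson constraints $\mathcal{S}$, which impose the further condition $M<\delta$ when a constraint involves the distance $\delta$ (Lemma \ref{lemma:plusMclosed}).

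Two smaller points. First, you flag unique factorization as ``the main obstacle,'' but it is the easy part: passing to the graph whose edges are the pairs at distance $\ne M$, a $+_M$-decomposition is exactly a decomposition into unions of connected components, so free decomposition follows as in Lemma \ref{lemma:plusEisDecompOp}; no strong amalgamation hypothesis is needed for Cameron's criterion as stated in Theorem \ref{lemma:Cam:polyAlg}. Second, ``geodesic paths of every length'' cannot witness infinitely many indecomposables, since a geodesic in a space of diameter $\delta$ has length at most $\delta$; the paper instead uses stars (a point with $n$ neighbors at distance $1$), which are connected by weight-$1$ edges and hence $+_M$-indecomposable because $M\ge\delta/2>1$.
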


This result depends on two ingredients: a criterion introduced by Cameron in \cite{Cam_AA}, and a ``disjoint sum" operation for metrically homogeneous graphs suggested by the ``magic parameter" used in \cite{Czechs}. This magic parameter is used by Aranda et al. in order to complete compatible edge-labeled graphs to metrically homogeneous graphs in $\aclass$. Their approach is a more general version of the completion process we develop in \cite{C-TopDynRamThe}, which was developed for an altogether different purpose than that of Cameron in \cite{Cam_AA}.

\section{Background}\label{sec:algAge}

\subsection{Algebra of an age}

The algebra in question may be defined as follows.

\begin{defn}\cite{Cam_AA}\label{defn:algAge}
Let $\Omega$ be a set, and $G$ a group acting on $\Omega$. The \emph{reduced incidence algebra} $\mathcal{A}$ associated with the partial order of finite subsets of $\Omega$ \cite{Rota} is the graded $\mathbb{Q}$-algebra defined as follows. For each $n$ let $V_n$ be the vector space of $\mathbb{Q}$-valued functions on the set of $n$-element subsets of $\Omega$. Then $\mathcal{A} = \bigoplus V_n$ with multiplication determined by
$$(fg)(X) = \sum_{X = X_1 \sqcup X_2} f(X_1)g(X_2).$$ Then $G$ acts naturally on $\mathcal{A}$ and $\mathcal{A}^G$ denotes the subalgebra of $G$-invariant functions.
\end{defn}

The algebra $\mathcal{A}^G$ is sometimes referred to as the \emph{orbit algebra} of $G$.

\begin{rmk}
Equivalently, $\mathcal{A} = \bigoplus V_n^G$ where $V_n^G$ is the space of functions constant on $G$-orbits, which may be identified with the space of functions on the $G$-orbits. If $G$ is the automorphism group of a homogeneous structure $\Gamma$, then the orbits on sets of order $n$ are the isomorphism types of substructures of $\Gamma$ of order $n$, which make up the so-called ``age'' of $\Gamma$.
\end{rmk}

Cameron gave a criterion in purely structural terms sufficient to establish that the algebra is polynomial, with many applications, among them the case in which the structure $\Gamma$ is the random graph. We will show that his method applies also in our general case. 

We present a modified version of Cameron's framework, with slightly narrower assumptions than his.

\begin{defn}\label{defn:CamCri}
Let $\mathscr{C}$ be a class of finite structures, closed under isomorphism. We write $A \subseteq B$ for the partial substructure relation (e.g., subgraph is partial substructure, whereas induced subgraph is a substructure).
\begin{enumerate}
\item A \emph{decomposition operator} for $\mathscr{C}$ consists of a binary operation $+$ on $\mathscr{C}$ satisfying the following conditions.
\begin{itemize}
    \item \emph{Functorality}: On isomorphisms of structures in $\mathscr{C},$ we have that for any pair of isomorphisms $i \colon A \rightarrow A'$ and $j \colon B \rightarrow B',$ the operator $+$ satisfies
    $$A+ B \simeq A' + B';$$
    \item \emph{Additivity}: $|A + B| = |A| + |B|$;
    \item \emph{Unique decomposition}: The commutative semigroup $(\mathscr{C},+)$ is freely generated by its indecomposable elements.
\end{itemize}
\item A decomposition operator for $\mathscr{C}$ is \emph{free} if there is a partial order on $\mathscr{C}$ satisfying the following.
\begin{center}
    For $A$ in $\mathscr{C}$, if $A$ is partitioned into induced substructures $B_1,\cdots,B_k$, then $B_1 + \cdots + B_k \leq A$.
\end{center}
\end{enumerate}
\end{defn}

\begin{rmk}
If a decomposition operator on $\mathscr{C}$ is free then there is a canonical partial order $\leq$ associated with the theory. Namely, one considers the transitive closure of the relation $$B \leq^{+} A$$ defined on $\mathscr{C}$ by
\begin{center}
    $B = B_1 + \cdots + B_k$ for some partition of $A$ into induced substructures.
\end{center}
In particular, this partial order is also invariant under isomorphism. In practice however, one proves freeness by specifying a suitable partial order.
\end{rmk}

\begin{ex} The decomposition of graphs as disjoint sums of connected graphs is a decomposition theory. It is free with respect to the subgraph relation.
\end{ex}

More subtle examples are found in \cite{Cam_AA}.

The point of this is the following.

\begin{thm}\cite[Theorem 2.1]{Cam_AA}\label{lemma:Cam:polyAlg}
If $G = \Aut(\Gamma)$ is the automorphism group of a homogeneous structure for a finite relational language, and the age $\mathscr{C}$ of $\Gamma$ has a free decomposition operator, then the algebra $\mathcal{A}^G$ is the polynomial algebra with generators corresponding to the isomorphism types of indecomposable elements of $\mathscr{C}$.
\end{thm}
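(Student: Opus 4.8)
The plan is to exhibit an explicit graded isomorphism between the polynomial ring $R = \mathbb{Q}[x_P : P \text{ indecomposable in } \mathscr{C}]$ and $\mathcal{A}^G$. Recall from Definition \ref{defn:algAge} that $\mathcal{A}^G = \bigoplus_n V_n^G$ carries the distinguished basis $\{e_A\}$ indexed by the isomorphism types $A \in \mathscr{C}$, where $e_A$ is the indicator function of the orbit of type $A$, so that $e_A \in V_{|A|}^G$. I would grade $R$ by setting $\deg x_P = |P|$ and define the algebra homomorphism $\phi \colon R \to \mathcal{A}^G$ on generators by $\phi(x_P) = e_P$. This is well defined because $\mathcal{A}^G$ is commutative (the multiplication in Definition \ref{defn:algAge} is symmetric in $X_1, X_2$), and it is graded because a product of functions supported in degrees $m$ and $m'$ is supported in degree $m + m'$. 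The heart of the argument is a leading-term computation for $\phi$ on monomials, after which the theorem follows from a triangularity argument carried out separately in each (finite-dimensional) graded piece.

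First I would record the relevant bijection. By unique decomposition, sending a monomial $x^\mu = \prod_i x_{P_i}^{a_i}$ to the type $A_\mu := \sum_i a_i P_i$, the sum taken in the semigroup $(\mathscr{C}, +)$, gives a degree-preserving bijection between the monomials of $R$ and the isomorphism types in $\mathscr{C}$. Since the language is finite and relational, there are only finitely many types of each size, so in each degree $n$ the number of monomials equals the number of types of size $n$, giving $\dim R_n = \dim (\mathcal{A}^G)_n < \infty$ for every $n$. In particular the transition matrices below will be square.

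Next I would establish the key leading-term formula
$$\phi(x^\mu) = c_{A_\mu}\, e_{A_\mu} + \sum_{C > A_\mu} c_C\, e_C, \qquad c_{A_\mu} > 0, \quad c_C \geq 0.$$
Evaluating $\phi(x^\mu) = \prod_i e_{P_i}^{a_i}$ on a set $X$ of type $C$ counts the ordered partitions of $X$ into induced substructures whose types are exactly the indecomposables appearing in $\mu$, with multiplicity. Every such partition witnesses, through freeness (Definition \ref{defn:CamCri}(2)), that $A_\mu = \sum_i a_i P_i \leq C$; hence $\phi(x^\mu)$ is supported on types $C \geq A_\mu$, and all coefficients are nonnegative as they are counts. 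The one point requiring the disjoint-sum structure is the strict positivity of the diagonal coefficient $c_{A_\mu}$: a set of type $A_\mu$ admits at least one partition into induced substructures of types $P_i$, namely the canonical one exhibiting $A_\mu$ as the disjoint sum of its indecomposable summands. This positivity is the step I expect to be the main obstacle, since it is exactly where one must invoke that the operation $+$ realizes $A + B$ with $A$ and $B$ sitting inside as complementary induced substructures, rather than only the abstract semigroup axioms.

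Finally I would assemble the isomorphism by triangularity, degree by degree. Fixing $n$ and choosing any linear extension of the partial order $\leq$ on the finite set of types of size $n$, the leading-term formula together with the bijection $\mu \mapsto A_\mu$ shows that the square matrix expressing $\{\phi(x^\mu) : \deg x^\mu = n\}$ in the basis $\{e_C : |C| = n\}$ is, after ordering both index sets compatibly, triangular with nonzero diagonal entries $c_{A_\mu}$, hence invertible. Thus $\phi$ maps the monomial basis of $R_n$ to a basis of $(\mathcal{A}^G)_n$, and therefore $\phi$ is a graded isomorphism. Consequently $\mathcal{A}^G$ is the polynomial algebra on generators $e_P$, one for each isomorphism type of indecomposable element of $\mathscr{C}$, as claimed.
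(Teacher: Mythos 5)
The paper does not prove this statement at all: Theorem \ref{lemma:Cam:polyAlg} is imported from Cameron's paper (the text even notes that Cameron's version is ``phrased in more general terms''), so there is no internal proof to measure yours against. That said, your argument is essentially Cameron's own: grade $\mathbb{Q}[x_P]$ by $|P|$, send $x_P$ to the orbit indicator $e_P$, use unique decomposition to biject degree-$n$ monomials with size-$n$ types (both finite since the language is finite relational), and show the transition matrix to the basis $\{e_C\}$ is triangular with positive diagonal relative to a linear extension of the partial order witnessing freeness. Your upper-triangularity step is exactly the freeness clause of Definition \ref{defn:CamCri}: a nonzero value of $\phi(x^\mu)$ on a set of type $C$ produces a partition of $C$ into induced substructures of types $P_i$, whence $A_\mu = \sum_i a_iP_i \leq C$, and antisymmetry of $\leq$ within a fixed degree does the rest.

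The one point you flag --- positivity of the diagonal entry, i.e.\ that a structure of type $A_\mu$ genuinely admits a partition into induced copies of its indecomposable summands --- is the right place to worry. Read literally, Definition \ref{defn:CamCri} constrains $+$ only as an abstract operation on isomorphism types (functorality, $|A+B|=|A|+|B|$, free commutative semigroup) and never asserts that $A$ and $B$ sit inside $A+B$ as complementary induced substructures; without that, the diagonal entry could vanish and the triangularity argument does not close. This realizability property is part of Cameron's original hypotheses and is clearly the intended reading here; it holds by construction for every operator the paper actually uses ($+_E$ of Definition \ref{defn:plusE}, its metric specializations $+_M$, and the operation on the bipartite antipodal age in Lemma \ref{lemma:bipAnt}). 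So your proof is complete once you add that condition explicitly to the hypotheses (or observe that it is implicit in the framework); aside from making that assumption visible, nothing is missing.
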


The statement given in \cite{Cam_AA} is phrased in more general terms.

\begin{ex}\cite[Example 1]{Cam_AA} The algebra associated with the random graph is a polynomial algebra.
\end{ex}

The following is immediate.

\begin{lemma}
Let $\mathscr{C}$ have the free decomposition operator $+$ and let $\mathscr{C}'$ be a $+$-closed hereditary subset of $\mathscr{C}$. Then $+$ is a free decomposition operator for $\mathscr{C}'$.
\end{lemma}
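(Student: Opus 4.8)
The plan is to verify, one clause at a time, that the operation $+$ restricted to $\mathscr{C}'$, together with the restriction of the given partial order on $\mathscr{C}$, satisfies Definition~\ref{defn:CamCri}. Closure of the operation is exactly the hypothesis that $\mathscr{C}'$ is $+$-closed, so $+$ is a genuine binary operation on $\mathscr{C}'$. Functorality and additivity are inherited verbatim: an isomorphism between structures in $\mathscr{C}'$ is in particular an isomorphism between structures in $\mathscr{C}$, and cardinalities are unaffected by passing to a subclass. Thus the only substantive clause is unique decomposition, with the verification of freeness reducing to the survival of a single inequality under restriction.

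For freeness, I would take the partial order on $\mathscr{C}'$ to be the restriction of the given partial order on $\mathscr{C}$; the restriction of a partial order to a subset is again a partial order. If $A \in \mathscr{C}'$ is partitioned into induced substructures $B_1, \ldots, B_k$, then each $B_i$ lies in $\mathscr{C}'$ because $\mathscr{C}'$ is hereditary, the sum $B_1 + \cdots + B_k$ lies in $\mathscr{C}'$ because $\mathscr{C}'$ is $+$-closed, and the inequality $B_1 + \cdots + B_k \leq A$ holds in $\mathscr{C}$ by freeness of $+$ there, hence in the restricted order. So freeness passes to $\mathscr{C}'$ with no extra work.

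The heart of the argument is unique decomposition, and the key observation is that the indecomposable elements of $(\mathscr{C}', +)$ are precisely the indecomposable elements of $(\mathscr{C}, +)$ that happen to lie in $\mathscr{C}'$. One inclusion is automatic: a $\mathscr{C}$-indecomposable element of $\mathscr{C}'$ cannot be nontrivially decomposed even in the larger class $\mathscr{C}$, so a fortiori it is $\mathscr{C}'$-indecomposable. For the reverse inclusion, suppose $A \in \mathscr{C}'$ admits a nontrivial decomposition $A = B + C$ in $\mathscr{C}$. Here the summands $B$ and $C$ embed as induced substructures of $A$ by the disjoint-sum nature of the operation (additivity reflects that the underlying set of $A + C$ is the disjoint union of those of $A$ and $C$, exactly as connected components sit inside a graph in the motivating example), so hereditariness of $\mathscr{C}'$ forces $B, C \in \mathscr{C}'$, whence $A$ is $\mathscr{C}'$-decomposable. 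This is the one place where both hypotheses on $\mathscr{C}'$ are used together, and I expect it to be the step requiring the most care, since it is where one must check that decomposing inside $\mathscr{C}'$ coincides with decomposing inside $\mathscr{C}$.

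Granting this identification of indecomposables, the free-generation statement transfers immediately. Given $A \in \mathscr{C}'$, its unique decomposition $A = I_1 + \cdots + I_m$ into $\mathscr{C}$-indecomposables has all factors in $\mathscr{C}'$ (again by hereditariness, since the $I_j$ are induced substructures of $A$), and each $I_j$ is $\mathscr{C}'$-indecomposable by the identification above; thus it is a decomposition of $A$ into $\mathscr{C}'$-indecomposables. Any other such decomposition is also a decomposition into $\mathscr{C}$-indecomposables, so the two agree by unique decomposition in $\mathscr{C}$. Therefore $(\mathscr{C}', +)$ is freely generated by its indecomposable elements, completing the verification that $+$ is a free decomposition operator for $\mathscr{C}'$.
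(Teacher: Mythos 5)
Your overall architecture is the right one, and it is essentially the verification the paper leaves implicit (the paper states this lemma with no proof, introducing it as ``immediate''): closure, functorality, additivity, and the restricted partial order all pass to $\mathscr{C}'$ without incident, and everything reduces to showing that the $\mathscr{C}'$-indecomposables are exactly the $\mathscr{C}$-indecomposables lying in $\mathscr{C}'$. You also correctly identified that this identification is the only step where hereditariness must interact with the operation. However, your justification of that step has a gap relative to the statement as written: you assert that in a decomposition $A = B + C$ the summands embed as induced substructures of $A$ ``by the disjoint-sum nature of the operation,'' reading this off from additivity. But in Definition \ref{defn:CamCri}, additivity is only the cardinality equation $|A+B| = |A| + |B|$; nothing in the abstract axioms says that $A+B$ contains copies of $A$ and $B$ as substructures, so this property is not available for an arbitrary free decomposition operator.

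The gap matters because without that property the conclusion can genuinely fail: hereditariness then has no interaction with $+$ at all, and a $+$-closed subset is merely a sub-semigroup of a free commutative semigroup, which need not be free. (Compare the sub-semigroup of $(\mathbb{N}_{>0},+)$ generated by $2$ and $3$: its indecomposables are $2$ and $3$, yet $6 = 2+2+2 = 3+3$, violating unique decomposition.) What rescues the lemma in this paper is that it is only ever applied to the operators $+_E$ and $+_M$ of Definition \ref{defn:plusE}, which are disjoint unions with extra relations added only \emph{between} the two factors; for these operators the summands really are induced substructures of the sum, hereditariness then forces every $\mathscr{C}$-decomposition of an element of $\mathscr{C}'$ to remain inside $\mathscr{C}'$, and your argument goes through verbatim. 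So either restrict the lemma to operators of this kind, or add ``summands of a sum are induced substructures of the sum'' as an explicit hypothesis; with that one amendment your proof is complete, and it is the proof the paper's ``immediate'' is silently relying on.
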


\begin{ex}\cite[Example 1 (cont.)]{Cam_AA}
The algebra associated with the generic $K_n$-free graph is a polynomial algebra.
\end{ex}

\subsection{Metrically homogeneous graphs}

A connected graph $\Gamma$ is said to be \textit{metrically homogeneous} if, when viewed as a metric space using the path metric, every finite partial isometry from $\Gamma$ to itself can be extended to a full isometry.

The finite metrically homogeneous graphs were completely classified by Cameron \cite{Cam_TRANS}. Cherlin gave a catalog  of the known examples of infinite metrically homogeneous graphs in \cite{Che-2P}, with some evidence for its completeness. We deal only with the case of metrically homogeneous graphs of diameter at least $3$, as the case of smaller diameter falls under the case of homogeneous graphs, which have already been classified (\cite{Gard,Shee,LaW-HG}).

A large class of metrically homogeneous graphs in Cherlin's catalog are those of \emph{generic type}.

\begin{defn}\label{defn:gen}
A metrically homogeneous graph $\Gamma$ is of \emph{generic type} if it satisfies the following two conditions.
\begin{itemize}
    \item The graph induced on the set of neighbors of a vertex is primitive.
    \item The graph induced on the set of common neighbors of a pair of vertices at distance $2$ contains an infinite independent set.
\end{itemize}
\end{defn}

The metrically homogeneous graphs of generic type are Fra\"{i}ss\'{e} limits, whose ages consist of finite integer-valued metric spaces. 

There is a uniform description of all known metrically homogeneous graphs of generic type, in terms of the class of \emph{$3$-constrained} metrically homogeneous graphs. A homogeneous structure is $3$-constrained if the associated amalgamation class is $3$-constrained in the sense that it is determined by a set of forbidden structures (constraints) of order at most $3$.

Thus the metrically homogeneous graphs we consider will determined by the triangles they forbid, together with certain technical constraints called ``Henson constraints," which consist of forbidden $(1,\delta)$-spaces, where $\delta$ the diameter of the metrically homogeneous graph. The forbidden triangles are determined by five parameters $(\delta,K_1,K_2,C,C')$. The Henson constraints are written $\mathcal{S}$; a given metrically homogeneous graph of generic type will be written as $\Gamma^{\delta}_{K_1,K_2,C,C',\mathcal{S}}$.

Cherlin established the following.
\begin{fact}\cite[Theorems 12.1 and 13.1]{CheCat}\label{fact:3constr}
Let $\Gamma$ be a $3$-constrained metrically homogeneous graph of generic type and diameter at least $3$. Then
\begin{enumerate}
\item The set of triangles embedding in $\Gamma$ is determined by the numerical parameters $\delta,K_1,K_2,C_0$, and $C_1$.
\item These parameters satisfy one of the three sets of numerical conditions given in Table \ref{Table:admissible}.
\end{enumerate}
Conversely, every such parameter sequence is realized by some $3$-constrained metrically homogeneous graph.
\end{fact}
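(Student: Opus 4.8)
The object to control is the set $\mathcal{T}(\Gamma)$ of $3$-point integer-valued metric spaces (``triangles'') that embed into $\Gamma$. By metric homogeneity this set is well defined, and since $\Gamma$ is $3$-constrained, its age, and hence $\Gamma$ itself, is recovered from $\mathcal{T}(\Gamma)$ together with the Henson constraints $\mathcal{S}$. A triangle is a triple $(a,b,c)$ of integers in $\{1,\dots,\delta\}$ satisfying the triangle inequality, and the plan is to prove that membership in $\mathcal{T}(\Gamma)$ depends only on a few numerical invariants of $(a,b,c)$: the diameter bound $c \le \delta$, the triangle inequality, the perimeter $p = a+b+c$, and the parity of $p$.

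For part (1), the heart is a monotonicity statement: within a fixed parity class, the admissible perimeters should form an interval. First I would use the primitivity of the neighborhood graph together with the infinite independent set among common neighbors of a pair at distance $2$ (the two generic-type hypotheses) to show that $\mathcal{T}(\Gamma)$ is rich enough to contain all triangles in some middle band of perimeters. Then, using amalgamation, I would argue that admissibility cannot fail in the interior of this band: from an admissible triangle one amalgamates with a suitable auxiliary space to produce admissible triangles of adjacent perimeter and of either parity, propagating admissibility down to a lower threshold and up to an upper threshold. The lower thresholds (one per parity) are the short-triangle bounds $K_1,K_2$, and the upper thresholds are the large-perimeter bounds $C_0,C_1$; together with $\delta$ and the triangle inequality these five numbers cut out $\mathcal{T}(\Gamma)$ exactly, giving (1).

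For part (2), the numerical conditions of Table~\ref{Table:admissible} arise from a case analysis on the structural type of $\Gamma$. The relevant dichotomies are whether the distance-$1$ relation respects a global $2$-colouring (a bipartite-type regime, forcing the short odd-perimeter triangles to be absent and constraining $K_1,K_2$ relative to $\delta$) and whether an antipodal identification occurs at the top of the range (constraining $C_0,C_1$). Excluding the degenerate regimes ruled out by generic type and diameter $\ge 3$ should leave precisely the three admissible families of inequalities, and I would verify that in each regime the thresholds extracted in part (1) satisfy the stated relations, with no further relations forced.

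The converse, realizability of every admissible parameter sequence, is where I expect the real work; it is essentially an amalgamation (equivalently, a completion) argument. Given parameters satisfying one of the three condition sets, one defines a class by forbidding exactly the triangles outside the numerically defined set, together with the prescribed Henson constraints, and must check the amalgamation property. The difficulty is that when two admissible metric spaces are glued over a common substructure, each new pair of points is only constrained to a distance lying between a geodesic (longest-path) lower estimate and a shortest-path upper estimate; one must show this interval is nonempty and contains a value of the correct parity keeping every newly formed triangle admissible, simultaneously against all four perimeter thresholds. Reconciling these competing bounds is exactly what splits the analysis into the three parameter regimes, and carrying through this nonemptiness-of-interval estimate is the main obstacle.
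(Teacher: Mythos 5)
This statement is imported into the paper as a Fact, cited to Cherlin's catalog (Theorems 12.1 and 13.1 of \cite{CheCat}); the paper contains no proof of it, so there is nothing internal to compare your argument against. What can be assessed is whether your outline is a viable route to the result, and there I see one concrete structural error plus a global concern. The error is in your setup for part (1): you assert that membership of a triangle $(i,j,k)$ in $\mathcal{T}(\Gamma)$ should depend only on the diameter bound, the triangle inequality, the perimeter $p=i+j+k$, and the parity of $p$, and accordingly you frame the key lemma as ``within a fixed parity class the admissible perimeters form an interval.'' That is not the correct shape of the answer. As Definition \ref{defn:Ks} records, the odd-perimeter upper constraint is $p < 2K_2 + 2\min(i,j,k)$, which depends on the minimum side length and not merely on $p$; two odd triangles of equal perimeter can differ in admissibility. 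So the invariant list must include $\min(i,j,k)$, and the monotonicity statement you plan to propagate by amalgamation has to be reformulated (roughly, monotonicity in the perimeter \emph{for fixed minimum side}, or in the quantity $p - 2\min(i,j,k)$) before the amalgamation bootstrapping can even be stated correctly.

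The global concern is that the proposal defers essentially all of the substance. The classification of which triangle sets can occur, the derivation of the three mutually exclusive condition sets of Table \ref{Table:admissible}, and above all the converse (realizability) via an amalgamation/completion argument constitute a long and delicate analysis in \cite{CheCat}; your sketch correctly identifies where the difficulties lie (the nonemptiness of the admissible-distance interval when completing an amalgam, and its interaction with the parameter regimes), but identifying the obstacle is not the same as overcoming it. In particular, the three regimes of Table \ref{Table:admissible} are not merely a bipartite/antipodal dichotomy as you suggest: cases (b) and (c) are distinguished by the size of $C$ relative to $2\delta + K_1$, with bipartiteness ($K_1 = \infty$) split off separately, and several of the side conditions (e.g.\ the clauses involving $C' > C+1$) have no natural home in the dichotomy you describe. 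As it stands the proposal is a plausible table of contents for Cherlin's argument, with one wrong load-bearing claim, rather than a proof.
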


\begin{table}[H]
\begin{description}
\item [Case (a)] (bipartite case). $K_1 = \infty$:
\begin{itemize}
    \item $K_2 = 0$ and $C_1 = 2\delta +1$.
\end{itemize}
\item[Case (b)] (low $C$). $K_1$ finite, $C \leq 2\delta + K_1$.
\begin{itemize}
    \item $C = 2K_1 + 2K_2 + 1 \geq 2\delta + 1$;
    \item $K_1 + 2K_2 \leq 2\delta -1$;
    \item If $C' > C +1$ then $K_1 = K_2$ and $3K_2 = 2\delta -1$.
\end{itemize}
\item[Case (c)] (high $C$). $C > 2\delta + K_1$.
\begin{itemize}
    \item $K_1 + 2K_2 \geq 2\delta -1$ and $3K_2 \geq 2\delta$;
    \item If $K_1 + 2K_2 = 2\delta -1$ then $C \geq 2\delta + K_1 + 2$;
    \item If $C' > C +1$, then $C \geq 2\delta + K_2$.
\end{itemize}
\end{description}

\caption{Admissible parameter choices with $\delta \geq 3$}
\label{Table:admissible}
\end{table}
We will have occasion to refer to the precise conditions on the numerical parameters shown in Table \ref{Table:admissible}. We call such sequences of numerical parameters \emph{admissible}.

The parameters $(\delta,K_1,K_2,C,C')$ determine the forbidden triangles as follows.

\begin{defn}\label{defn:Ks}
Let $\delta,K_1,K_2,C_0,C_1$ be numerical parameters, some of which may be infinite. The associated set $\mathcal{T}(\delta,K_1,K_2,C_0,C_1)$ of forbidden triangle types consists of all triangle types $(i,j,k)$ which violate one or more of the following constraints.
\begin{itemize}
\item $i,j,k \leq \delta$;
\item If the perimeter $p = i+j+k$ is odd, then $2K_1 < p < 2K_2 + 2\min(i,j,k)$.
\item If the perimeter $p \equiv \epsilon \pmod 2$, ($\epsilon = 0$ or $1$), then $p < C_{\epsilon}$.
\end{itemize}
\end{defn}

There are two ways in which metrically homogeneous graphs of generic type can be \emph{imprimitive}, that is, they carry a non-trivial equivalence relation invariant under the action of the automorphism group. Imprimitive metrically homogeneous graphs of generic type are bipartite or antipodal (or both). Antipodal metrically homogeneous graphs are ones in which every vertex $v$ has a unique vertex $v'$ at distance $\delta$. The imprimitive graphs are somewhat exceptional cases of metrically homogeneous graphs of generic type.

This all amounts to the following.
\begin{fact}\cite[Theorems 9 and 14]{Che-2P}
Let $(\delta,K_1,K_2,C,C',\mathcal{S})$ be an admissible sequence of parameters; in particular, $\mathcal{S}$ is a set of Henson constraints of the appropriate type. Then $\aclass$ is an amalgamation class, and thus one may speak of the associated metrically homogeneous graph $\Gamma^{\delta}_{K_1,K_2,C,C',\mathcal{S}}$.
\end{fact}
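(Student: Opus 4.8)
The plan is to verify directly the three defining conditions of an amalgamation class for $\aclass$, namely the hereditary property, the joint embedding property, and the amalgamation property. Here $\aclass$ is the class of finite integer-valued metric spaces with all distances at most $\delta$ that omit every forbidden triangle of Definition~\ref{defn:Ks} and omit every member of the Henson family $\mathcal{S}$. The hereditary property is immediate, since any subset of such a space inherits an integer metric with distances at most $\delta$, and the absence of a forbidden triangle or of a forbidden $(1,\delta)$-space is preserved under passing to substructures. The joint embedding property is the special case of amalgamation over the empty metric space, so it suffices to establish the amalgamation property.

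For amalgamation, the first step is the standard reduction to filling in a single missing distance. Given $A \subseteq B$ and $A \subseteq C$ in $\aclass$, one forms the amalgam on the vertex set $B \cup_A C$ and must assign each cross distance $d(b,c)$ with $b \in B \setminus A$ and $c \in C \setminus A$. Filling these in one at a time, the task at each stage is to choose $d(b,c)$ so as to be compatible with all points already placed. Because the metric and the forbidden-triangle conditions of Definition~\ref{defn:Ks} are $3$-local, their satisfaction for the new distance is governed entirely by the triangles $\{a,b,c\}$ with $a$ ranging over the points to which both endpoints already have a determined distance; this is the sense in which the problem reduces to the basic case of a single undetermined distance over a common base $A$.

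The core of the argument is then to exhibit an admissible value for $d(b,c)$. The triangle inequality forces
$$\max_{a \in A} |d(a,b) - d(a,c)| \;\le\; d(b,c) \;\le\; \min\Bigl(\delta,\; \min_{a \in A}\bigl(d(a,b) + d(a,c)\bigr)\Bigr),$$
and a short reverse-triangle-inequality computation, using that $B$ and $C$ are already valid spaces of the class, shows this interval is nonempty. One must then select within it a value for which no triangle $\{a,b,c\}$ lies among the forbidden triangles. The numerical conditions of Definition~\ref{defn:Ks} translate into parity requirements on the perimeters $d(a,b)+d(a,c)+d(b,c)$ together with the perimeter bounds governed by $K_1,K_2,C,C'$, and the admissibility inequalities of Table~\ref{Table:admissible} are exactly what guarantee, regime by regime (bipartite, low-$C$, high-$C$), that a value of the correct parity survives all these constraints simultaneously. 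Typically one verifies that one of the two extreme geodesic values, or a value of the correct parity adjacent to an extreme, is safe.

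The step I expect to be the main obstacle is the treatment of the Henson constraints $\mathcal{S}$. Unlike the triangle conditions, a forbidden $(1,\delta)$-space is not $3$-local, so the one-distance-at-a-time reduction does not by itself control it: a forbidden configuration straddling $B$ and $C$ can appear only after several cross distances are filled. The strategy is to argue that such a configuration would force some cross pair to lie at distance $1$ (or at distance $\delta$), and that one can systematically steer the chosen distances off these extreme values — this is precisely the sort of global completion procedure suggested by the ``magic parameter'' of \cite{Czechs}. The difficulty concentrates at the boundaries of the parameter regimes, where the admissible interval for $d(b,c)$ is squeezed toward $\{1\}$ or $\{\delta\}$; showing the interval is wide enough to avoid the extreme, or that the completed $(1,\delta)$-pattern is not itself among the forbidden ones, requires the delicate equalities of Table~\ref{Table:admissible} — especially the cases with $C' > C+1$, where $K_1 = K_2$ and $3K_2 = 2\delta - 1$ come into play — to do their work.
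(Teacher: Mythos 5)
The paper offers no proof of this statement: it is imported verbatim as a Fact from Cherlin \cite{Che-2P}, so the only meaningful comparison is with Cherlin's own argument. Your outline does follow the strategy actually used there --- reduce to two-point amalgamation, bound the missing distance $d(b,c)$ above and below by the triangle inequality, then locate a value of suitable parity violating none of the constraints of Definition~\ref{defn:Ks} --- so the approach is the right one. But the decisive step, the regime-by-regime verification that such a value exists for \emph{every} admissible parameter sequence, is asserted rather than carried out (``the admissibility inequalities of Table~\ref{Table:admissible} are exactly what guarantee\dots''). That verification is essentially the entire content of the theorem: Cherlin's proof singles out explicit candidate completions (roughly, the minimal and maximal geodesic values corrected for parity and for the bounds imposed by $K_1$, $K_2$, $C$, $C'$) and checks, case by case across the three admissibility regimes of Table~\ref{Table:admissible}, that at least one candidate survives. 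Without that computation the proposal is a plan, not a proof.

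One smaller point. Your worry about the Henson constraints --- that a forbidden $(1,\delta)$-space ``can appear only after several cross distances are filled'' and therefore escapes the local analysis --- dissolves once the reduction to two-point amalgamation is done properly. In a two-point amalgamation step the only new pair is $(b,c)$ itself, so any newly created forbidden configuration must contain that pair, and since the Henson constraints are $(1,\delta)$-spaces this forces $d(b,c)\in\{1,\delta\}$. The Henson obstruction is therefore just as local as the triangle obstructions: one needs either an admissible value of $d(b,c)$ outside $\{1,\delta\}$, or a check that the completed $(1,\delta)$-configuration is not itself forbidden. The genuinely delicate cases are those where the admissible interval collapses onto an extreme value, and handling them is again part of the missing case analysis rather than something the ``magic parameter'' of \cite{Czechs} supplies for free.
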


For much more on metrically homogeneous graphs, see \cite{CheCat}.

\section{Applying Cameron's Criterion}

\subsection{Direct sum operations} A straightforward generalization of the decomposition theories considered above for graphs is the following.

\begin{defn}\label{defn:plusE}
Let $\mathcal{L}$ be a relational language and $E$ a distinguished binary relation symbol in $\mathcal{L}$. For $\mathcal{L}$-structures $A,B$, define $$A +_E B$$ to be the structure consisting of the disjoint union of $A$ and $B$ together with all relations $E(a,b)$ and $E(b,a)$, for $(a,b)$ in $A \times B$. 
\end{defn}

\begin{lemma}\label{lemma:plusEisDecompOp}
With the notation of Definition \ref{defn:plusE}, the operation $+_E$ is a free decomposition operator on the class of finite $\mathcal{L}$-structures.
\end{lemma}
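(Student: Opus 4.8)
The plan is to verify the three axioms of a decomposition operator (functorality, additivity, unique decomposition) for $+_E$ and then exhibit a partial order witnessing freeness. Functorality and additivity are immediate: given isomorphisms $i \colon A \to A'$ and $j \colon B \to B'$, the disjoint union $i \sqcup j$ carries the internal relations of each summand correctly and sends the complete cross-$E$ pattern of $A +_E B$ to that of $A' +_E B'$, hence is an isomorphism; and $|A +_E B| = |A| + |B|$ since the operation is a disjoint union on universes. It is equally routine that $+_E$ is commutative and associative with the empty structure as identity, so $(\mathscr{C}, +_E)$ is a commutative monoid.

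The substance is unique decomposition, and I would encode decomposability through an auxiliary graph. Call a pair of distinct vertices $x, y$ of a structure $M$ \emph{cross-compatible} if $E(x,y)$ and $E(y,x)$ both hold and no relation other than these two $E$-facts holds on any tuple containing both $x$ and $y$; let $\bar{G}_M$ be the graph on $V(M)$ whose edges join the pairs that are \emph{not} cross-compatible. The key claim is that $M = M[W_1] +_E \cdots +_E M[W_k]$, where $W_1, \ldots, W_k$ are the connected components of $\bar{G}_M$. One direction is definitional: if $M = A +_E B$ then every cross pair is cross-compatible, so $\bar{G}_M$ carries no edge between $A$ and $B$. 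For the converse, if a vertex partition $V_1 \sqcup V_2$ admits no $\bar{G}_M$-edge across it, then for any relation $R$ and any tuple meeting both $V_1$ and $V_2$, cross-compatibility of a witnessing cross pair forces $R = E$ and binary on exactly that pair; hence the only cross relations are the complete binary $E$-edges, giving $M = M[V_1] +_E M[V_2]$. A locality observation is needed here, and I expect it to be the main obstacle: when $W$ is a union of $\bar{G}_M$-components, cross-compatibility of a pair inside $W$ is unchanged on passing to $M[W]$. Indeed, any tuple containing such a pair but leaving $W$ also contains some $w \notin W$; since $x \in W$ and $w$ lie in different components they are cross-compatible, and as the tuple is not the pair $\{x,w\}$ itself, no relation holds on it. This is what makes $\bar{G}_{M[W]} = \bar{G}_M[W]$ and hence shows each component $M[W_i]$ is indecomposable. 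Given this, uniqueness follows: in any decomposition into indecomposables the summands separate $\bar{G}_M$, so each is a union of components, yet each is individually $\bar{G}_M$-connected, forcing the summands to be exactly the components.

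For freeness I would take the canonical order of the Remark, namely the transitive closure $\leq$ of the relation $\leq^{+}$ given by $B_1 +_E \cdots +_E B_k \leq^{+} M$ whenever the $B_i$ are the induced structures on a vertex partition of $M$. The freeness inequality then holds by definition, so the only thing to check is that $\leq$ is genuinely a partial order, i.e.\ antisymmetric. I would supply the monovariant $N(M) = |E(\bar{G}_M)|$, the number of non-cross-compatible pairs. Passing from $M$ to a partition-sum $B_1 +_E \cdots +_E B_k$ leaves within-part structure untouched and standardizes every cross relation to complete $E$; one checks that this can only convert non-cross-compatible pairs into cross-compatible ones and never the reverse, so $N$ weakly decreases, and it decreases strictly unless the sum already equals $M$ (for otherwise $M$ carries a missing cross-$E$-edge or an extra cross relation, yielding a cross pair that is non-cross-compatible in $M$ but cross-compatible in the sum). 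Thus every nontrivial $\leq^{+}$-step strictly lowers $N$, no nontrivial cycle exists, and $\leq$ is antisymmetric, completing the verification that $+_E$ is a free decomposition operator.
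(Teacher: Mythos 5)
Your proof is correct, and its core mechanism is the same as the paper's: the paper associates to each structure $A$ the ``complement graph'' $A^c$ with edge relation $E$, so that $+_E$-decompositions of $A$ correspond to disjoint-sum decompositions of $A^c$, and unique decomposition follows from uniqueness of connected components; your graph $\bar{G}_M$ of non-cross-compatible pairs is exactly this complement graph, spelled out carefully enough to handle relations other than $E$ and of arity greater than $2$, which the paper's one-line description glosses over. Where you genuinely diverge is the freeness verification. The paper defines $B \leqP A$ directly by $B^c \subseteq A^c$ (subgraph containment of complements) and declares freeness immediate; this is shorter, though as literally stated it is only a preorder on general $\mathcal{L}$-structures, since non-isomorphic structures can have identical complement graphs. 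You instead take the canonical order from the Remark and prove antisymmetry via the monovariant $N(M) = |E(\bar{G}_M)|$, checking that every proper partition-sum step strictly decreases it. Your route costs an extra argument but buys a cleanly well-defined partial order in the full generality of the lemma's statement (arbitrary relational $\mathcal{L}$), and your locality observation $\bar{G}_{M[W]} = \bar{G}_M[W]$ makes the indecomposability of the component factors explicit, which the paper leaves implicit.
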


\begin{proof}
The operation is clearly functorial and additive.

If we associate to each structure $A \in \mathscr{C}$ the graph $A^c$ which is the graph complement of $A$ with edge relation $E$, then a decomposition of $A$ corresponds to a decomposition of $A^c$ as a disjoint sum. So unique decomposition follows.

Similarly, if $A$ is partitioned into induced subgraphs $B_1, \cdots, B_k$, then the disjoint sum $B_1^c, \cdots, B_k^c$ is contained in $A^c$. So we define $B \leqP A$ on $\mathscr{C}$ by $B^c \subseteq A^c$, and freeness follows.
\end{proof}

\begin{corlemma}
Let $\mathscr{C}$ be a hereditary class of binary relational structures. Let $E$ be a distinguished binary symmetric relation in the language. If $\mathscr{C}$ is closed under the operation $+_E$, then this operation provides a free decomposition theory for $\mathscr{C}$.
\end{corlemma}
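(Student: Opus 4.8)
The plan is to reduce this corollary directly to Lemma~\ref{lemma:plusEisDecompOp}, and the essential content will be checking that restricting the already-established free decomposition operator $+_E$ to a $+_E$-closed hereditary subclass preserves all four properties (functorality, additivity, unique decomposition, and freeness). First I would observe that $\mathscr{C}$ being closed under $+_E$ means that $+_E$ is genuinely a binary operation \emph{on} $\mathscr{C}$, so the statement makes sense; this is the hypothesis that must be invoked before anything else. Functorality and additivity are inherited immediately, since these are properties of individual applications of $+_E$ to structures of $\mathscr{C}$, which are in particular structures in the full class of finite $\mathcal{L}$-structures where Lemma~\ref{lemma:plusEisDecompOp} already supplies them.

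The step requiring genuine care — and the one I expect to be the main obstacle — is unique decomposition, because it is a property of the whole semigroup $(\mathscr{C}, +_E)$ rather than of a single operation, and restricting to a subclass could in principle interfere with free generation by indecomposables. The key point to verify is that the notion of indecomposability is the same in $\mathscr{C}$ as in the ambient class: since $\mathscr{C}$ is hereditary and $+_E$-closed, if $A \in \mathscr{C}$ factors as $A = B_1 +_E \cdots +_E B_k$ in the ambient semigroup, then each $B_i$ — being an induced substructure of $A$ (via the complement picture $A^c = B_1^c \sqcup \cdots \sqcup B_k^c$) — lies in $\mathscr{C}$ by heredity. Hence the factorization takes place entirely within $\mathscr{C}$, and the indecomposable elements of $\mathscr{C}$ are exactly the indecomposable elements of the ambient class that happen to lie in $\mathscr{C}$. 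Free generation by indecomposables then descends from the ambient semigroup, since the complement description $A \mapsto A^c$ translates $+_E$-decomposition into connected-component decomposition, which is manifestly unique.

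For freeness, I would simply reuse the partial order $\leqP$ defined in the proof of Lemma~\ref{lemma:plusEisDecompOp} by $B \leqP A \iff B^c \subseteq A^c$, restricted to $\mathscr{C}$. If $A \in \mathscr{C}$ is partitioned into induced substructures $B_1, \dots, B_k$, these substructures lie in $\mathscr{C}$ by heredity, their $+_E$-sum lies in $\mathscr{C}$ by $+_E$-closure, and the inequality $B_1 +_E \cdots +_E B_k \leqP A$ holds because it already holds in the ambient class. Thus freeness is inherited verbatim. I would remark that this corollary is essentially the instance of the earlier general lemma on $+$-closed hereditary subsets, specialized to the operation $+_E$ and to the requirement that $E$ be symmetric, which is exactly what is needed to ensure $A +_E B$ is again a well-defined object of a symmetric binary relational class. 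The entire argument is therefore bookkeeping around the complementation dictionary, with no new combinatorial input beyond heredity and $+_E$-closure.
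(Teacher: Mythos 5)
Your argument is correct and matches the paper's intent exactly: the corollary is stated without proof precisely because it is the combination of Lemma~\ref{lemma:plusEisDecompOp} with the earlier unnumbered lemma on $+$-closed hereditary subsets, and your verification (heredity keeps all factors of a decomposition inside $\mathscr{C}$, so indecomposability and unique factorization descend, and the partial order $B \leqP A \iff B^c \subseteq A^c$ restricts without change) is the bookkeeping the paper leaves implicit. Nothing is missing.
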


In practice, the case one has in mind in the above is the following: $\mathscr{C}$ is the class of finite substructures of a homogeneous structure in a binary relational language $L$. The language $L$ consists of names for the orbits on pairs of distinct elements. Furthermore, $\mathscr{C}$ should have a transitive automorphism group (otherwise, the single relation $E$ would be replaced by a finite set of relations, complicating the notation).

In the case of graphs, $E$ is either the edge or the non-edge relation.

\subsection{The case of metrically homogeneous graphs}

The question now becomes, what sorts of generalized disjoint sum operations are available in the language of metrically homogeneous graphs. 

Here we replace the notation $+_E$ by the notation $+_i$, where $i$ is the distance corresponding to the binary relation $E$. Recall that in our context of metrically homogeneous graphs, the binary relations correspond to the distances in $[\delta]$.

The following is implicit in \cite{Czechs}.

\begin{lemma}
Let $\Gamma$ be a $3$-constrained metrically homogeneous graph with parameters $(\delta,K_1,K_2,C,C')$, where $\delta \geq 3$. Let $M \in [\delta]$. Then the following are equivalent.
\begin{itemize}
    \item The class $\mathscr{C}$ of finite substructures of $\Gamma$ is closed under the operation $+_M$.
    \item $\max(K_1,\delta/2) \leq M \leq \min(K_2,(C-\delta-1)/2)$.
\end{itemize}
\end{lemma}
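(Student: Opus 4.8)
The plan is to reduce closure of $\mathscr{C}$ under $+_M$ to a finite family of triangle conditions, and then read off the numerical inequalities from Definition \ref{defn:Ks}. Since $\Gamma$ is $3$-constrained, a finite integer metric space lies in $\mathscr{C}$ if and only if every distance appearing in it is at most $\delta$ and none of its $3$-point subspaces is a forbidden triangle from $\mathcal{T}(\delta,K_1,K_2,C_0,C_1)$. So I fix $A,B\in\mathscr{C}$ and examine $A+_M B$. Its pairwise distances are the internal distances of $A$, the internal distances of $B$, and the constant value $M$ on cross pairs, all of which lie in $[\delta]$. A $3$-point subspace is either contained in $A$, contained in $B$, or a ``cross triangle'' with two vertices in one part at internal distance $i$ and one vertex in the other part, which has type $(i,M,M)$. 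The within-$A$ and within-$B$ triangles are fine because $A,B\in\mathscr{C}$, so $A+_M B\in\mathscr{C}$ if and only if every cross triangle $(i,M,M)$ that actually occurs is a valid metric triangle and is not forbidden.

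Because $\Gamma$ is connected of diameter $\delta$, every value $i\in[\delta]$ is realized as a distance between two vertices, so the two-point space at distance $i$ lies in $\mathscr{C}$ for each such $i$; taking $A$ to be this space and $B$ a single point shows that $(i,M,M)$ must be a legal, non-forbidden triangle for \emph{every} $i\in[\delta]$ whenever $\mathscr{C}$ is closed under $+_M$. Conversely, if every $(i,M,M)$ with $i\in[\delta]$ is a non-forbidden metric triangle, then by the previous paragraph every $A+_M B$ avoids forbidden triangles and satisfies all triangle inequalities, hence lies in $\mathscr{C}$. This reduces the lemma to: \emph{$(i,M,M)$ is a non-forbidden metric triangle for all $i\in[\delta]$ if and only if $\max(K_1,\delta/2)\le M\le\min(K_2,(C-\delta-1)/2)$.}

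For this final step I would run through the clauses of Definition \ref{defn:Ks} for the type $(i,M,M)$, whose perimeter is $p=i+2M$ (of the same parity as $i$) and whose minimum side is $\min(i,M)$. The triangle inequality for $(i,M,M)$ is exactly $i\le 2M$, whose worst case $i=\delta$ gives $M\ge\delta/2$. For odd $i$ the lower clause $2K_1<p$ is tightest at $i=1$ and yields $2K_1<2M+1$, i.e.\ $M\ge K_1$. The odd upper clause $p<2K_2+2\min(i,M)$ splits according to whether $i\le M$ or $i>M$: the case $i\le M$ is tightest at $i=1$ and gives $M\le K_2$, while the case $i>M$ reduces to $i<2K_2$, which is then automatic from $i\le 2M\le 2K_2$ together with the parity fact $i\ne 2K_2$. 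Finally the perimeter clause $p<C_{i\bmod 2}$ is tightest at the maximal triangle $(\delta,M,M)$ and gives $\delta+2M<C$, equivalently $M\le(C-\delta-1)/2$. Assembling the four inequalities produces the stated range, and for necessity each boundary violation is witnessed by an explicit bad pair (an edge or a $\delta$-pair together with a single point).

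The main obstacle I anticipate is the perimeter clause, where the bookkeeping between the two parity-indexed bounds $C_0,C_1$ and the single parameter $C$ (together with $C'$) must be handled with care: one has to verify that among the triangles $(\delta,M,M)$ and $(\delta-1,M,M)$ it is the former that is binding, and that the opposite-parity bound imposes nothing stronger than $\delta+2M<C$. This is where the admissibility relations of Table \ref{Table:admissible} enter, and it is consistent with the boundary behavior recorded in Theorem \ref{thm:genPolyAlg}: when $C=2\delta+1$ the bound forces $M=\delta/2$, which is exactly why that case requires $\delta$ even. A secondary, purely routine, point is the parity/strictness argument disposing of the case $i>M$ in the odd upper clause.
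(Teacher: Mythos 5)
Your proof is correct and follows essentially the same route as the paper's: reduce closure under $+_M$ to the requirement that every triangle of type $(M,M,i)$ with $i \in [\delta]$ be a non-forbidden metric triangle (using an $i$-pair plus a point for necessity), then translate each clause of the forbidden-triangle definition into one of the four inequalities. If anything, your handling of the $C_0/C_1$ parity bookkeeping and of the case split $i \le M$ versus $i > M$ in the odd upper bound is more explicit than the paper's, which simply asserts that $2M + \delta < C$ is the relevant perimeter condition and lists the $K_1,K_2$ inequalities without the min-side case distinction.
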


\begin{proof} 
This result is covered by \cite[Observation 4.1]{Czechs}; we give here some additional details for the proof.

As any triangles occurring in a composition $A +_M B$ have type $(M,M,i)$ for some distance $i \leq \delta$, and all such triangles occur in some composition, the first item is equivalent to the requirement that all triangles of type $(M,M,i)$ embed into $\Gamma$.

The condition $$M \geq \delta/2$$ is necessary and sufficient to ensure that structures in $\mathscr{C} +_M \mathscr{C}$ satisfy the triangle inequality.

Similarly, the condition $2M + \delta < C$ is necessary and sufficient to ensure that $\mathscr{C} +_M \mathscr{C}$ respects the perimeter bound.

It remains to consider constraints on triangles of odd perimeter; we assume now that triangle types $(M,M,i)$ have $i$ odd. 

By considering triangles of type $(M,M,1)$, we find that the conditions $$K_1 \leq M \leq K_2$$ are necessary.

It remains to check their sufficiency. There are three conditions on triangles of type $(M,M,i)$ corresponding to the parameters $K_1, K_2$.
\begin{align*}
    2M + i \geq K_1 && M + i \leq 2K_2 + 2M && 2M \leq 2K_2 + 2i
\end{align*}
\begin{align*}
    2M+i \geq 2K_1 +1 && 2M + i \leq 2K_2 + 2M && 2M + i \leq 2K_2 + 2i
\end{align*}
If $K_1 \leq M$, the first inequality is satisfied. If $M \leq K_2$, then the additional condition $M \geq \delta/2$ yields the second inequality. The third inequality is immediate from the assumption $M \leq K_2$.

The lemma follows.
\end{proof}

We require a similar lemma for the general case, in which Henson constraints occur.

\begin{lemma}\label{lemma:plusMclosed}
Let $\Gamma$ be a metrically homogeneous graph of generic type and of known type, with associated parameters $(\delta,K_1,K_2,C,C',\mathcal{S})$, where $\delta \geq 3$. Let $M \in [\delta]$. Then the following are equivalent.
\begin{itemize}
    \item The class $\mathscr{C}$ of finite substructures of $\Gamma$ is closed under the operation $+_M$.
    \item $\max(K_1,\delta/2) \leq M \leq \min(K_2,(C-\delta-1)/2)$ and in addition
    \begin{itemize}
        \item If there is a constraint $H \in \mathcal{S}$ in which the distance $\delta$ occurs, then $M < \delta$.
    \end{itemize}
\end{itemize}
\end{lemma}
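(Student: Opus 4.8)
The plan is to reduce the statement to the preceding lemma together with a single structural observation about $(1,\delta)$-spaces. The inequality $\max(K_1,\delta/2) \le M \le \min(K_2,(C-\delta-1)/2)$ governs exactly the triangles of type $(M,M,i)$ that a composition $A +_M B$ can create, and these are the only new triangles that arise. Since forbidding the Henson constraints $\mathcal{S}$ does not change which triangles are permitted, this inequality is forced by, and suffices for, triangle-closure precisely as in the preceding lemma. All the genuinely new content therefore concerns the Henson constraints, and I would isolate it as follows.

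First I would record the key structural fact: since $\delta \ge 3$, every $(1,\delta)$-space is a disjoint union of distance-$1$ cliques in which all inter-clique distances equal $\delta$. Indeed, if $d(x,y) = d(y,z) = 1$ then the triangle inequality gives $d(x,z) \le 2 < \delta$, forcing $d(x,z) = 1$; hence the distance-$1$ relation is an equivalence relation whose classes are cliques, and two points in distinct classes necessarily lie at the only remaining distance, $\delta$. In particular, the distance $\delta$ occurs in a constraint $H \in \mathcal{S}$ if and only if $H$ has at least two such clusters. The next observation pins down when $+_M$ can manufacture a constraint: the operation introduces only cross-pairs at distance $M$ and leaves the structures induced on $A$ and on $B$ untouched, so if a $(1,\delta)$-space embeds as an induced substructure of $A +_M B$ its vertex set cannot meet both $A$ and $B$ unless $M \in \{1,\delta\}$. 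Since the inequality already forces $M \ge \delta/2 > 1$, the value $M = 1$ never arises, and the only delicate case is $M = \delta$. When $M < \delta$ every embedded $(1,\delta)$-space lies entirely inside $A$ or inside $B$, so no forbidden constraint is created and closure reduces to triangle-closure; note that the supplementary clause is then satisfied, as its conclusion $M < \delta$ holds.

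It remains to treat $M = \delta$, which I expect to be the crux. For the forward direction, suppose $\mathscr{C}$ is closed under $+_\delta$ while some $H \in \mathcal{S}$ involves $\delta$. By the structural fact $H$ has clusters $C_1,\dots,C_k$ with $k \ge 2$; setting $A = H[C_1]$ and $B = H[C_2 \cup \cdots \cup C_k]$, all cross-distances equal $\delta$, so $A +_\delta B = H$. Both $A$ and $B$ are proper induced substructures of $H$, so they lie in $\mathscr{C}$, contradicting closure; hence no constraint of $\mathcal{S}$ may involve $\delta$. For the converse, if no $H \in \mathcal{S}$ involves $\delta$ then every member of $\mathcal{S}$ is a single distance-$1$ clique, and such an all-$1$ configuration can never use a cross-pair (which realizes the distance $\delta \neq 1$); it must therefore embed inside $A$ or inside $B$, where it is already excluded. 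Combined with triangle-closure this yields closure of $\mathscr{C}$ under $+_\delta$, completing the equivalence.

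The step I would verify most carefully is the assertion in the forward direction that the pieces $A$ and $B$ belong to $\mathscr{C}$. This relies on the standard admissibility requirements for Henson constraints: the members of $\mathcal{S}$ contain no forbidden triangle (so neither do $A$ and $B$, being substructures of $H$) and form an antichain under the substructure relation (so no member of $\mathcal{S}$ can embed as a proper substructure of $H$, and $H$ itself is too large to embed in $A$ or $B$). Granting these properties, $A, B \in \mathscr{C}$ and the argument closes; if the conventions governing $\mathcal{S}$ were weaker, this is precisely where the proof would need adjustment.
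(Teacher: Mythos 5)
Your argument follows the paper's proof almost exactly: the reduction of the numerical inequality to the preceding triangle lemma, the observation that a forbidden configuration can only straddle $A$ and $B$ if $M$ is one of the distances occurring in it (so only $M=\delta$ is delicate), and the decomposition of a minimal constraint $H$ into one distance-$1$ cluster plus the rest to show that $+_\delta$-closure fails when $\delta$ occurs in $H$. You in fact supply more detail than the paper, which compresses the key step to ``such a constraint $H$ is itself $+_{\delta}$-decomposable, and as the factors are not forbidden, we require $M<\delta$''; your verification that the factors lie in $\mathscr{C}$ via the antichain/minimality conventions on $\mathcal{S}$ is exactly the right thing to check. The one case you omit is $C=2\delta+1$: there the paper notes that the Henson constraints are not $(1,\delta)$-spaces but involve the distances $1$ and $\delta-1$, so your structural fact (disjoint union of $1$-cliques at mutual distance $\delta$) does not describe them and your proof, as written, silently assumes something false about $\mathcal{S}$ in that case. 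Your cross-pair principle still disposes of it in one line --- the inequalities force $M=\delta/2$ with $\delta$ even, hence $M\notin\{1,\delta-1\}$ and no such constraint can meet both factors --- but the case needs to be acknowledged for the equivalence to be proved in full generality.
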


\begin{proof}
In general there are two notions of Henson constraint which apply: one in the case $C = 2\delta +1$, and one in the remaining cases.

When $C = 2\delta +1$, our conditions imply $M = \delta/2$ and in particular $\delta$ is even. In this setting, the Henson constraints involve distances $1$ and $\delta-1$.
Since $\delta$ is even, we have that $M \neq 1,\delta-1$ in this case. Thus the additional condition is both vacuous and unnecessary, and the previous lemma suffices.

So we come to the main case in which $C > 2\delta +1$ and $\mathcal{S}$ is a family of $(1,\delta)$-spaces. In this case, we have $M \geq \delta/2 > 1$, so if the distance $\delta$ does not occur in a Henson constraint, then once again the additional constraint is both vacuous and unnecessary.

We come down then to the case in which the distance $\delta$ does occur in some (minimal forbidden) Henson constraint $H$. If $M < \delta$, then no conflicts can arise. Conversely, such a constraint $H$ is itself $+_{\delta}$-decomposable, and as the factors are not forbidden, we require $M < \delta$.

This completes consideration of all cases.
\end{proof}

The following is closely related to \cite[Lemma 5.1]{Czechs}: the conditions for the completion process used there are slightly more restrictive than those required here, where we take only disjoint sums.

\begin{lemma}\label{lemma:plusMclosedAndParams}
The following conditions on an admissible sequence of parameters $(\delta,K_1,K_2,C,C',\mathcal{S})$ are equivalent.
\begin{itemize}
\item There is a parameter $M$ for which $\aclass$ is $+_M$-closed.
\item $\delta$ and $K_1$ are finite. If $C = 2\delta +1$, then $\delta$ is even.
\end{itemize}
\end{lemma}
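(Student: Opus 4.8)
The plan is to reduce the statement to a purely arithmetic condition on the parameters by invoking Lemma \ref{lemma:plusMclosed}, and then to verify that condition against the admissibility constraints of Table \ref{Table:admissible}. By that lemma, $\aclass$ is $+_M$-closed for some $M$ precisely when there exists an integer $M \in [\delta]$ lying in the interval
\[
\bigl[\max(K_1,\delta/2),\ \min(K_2,(C-\delta-1)/2)\bigr],
\]
subject to the side condition $M < \delta$ whenever the distance $\delta$ appears in a Henson constraint of $\mathcal{S}$. Thus the equivalence to be proved becomes the assertion that such an integer $M$ exists if and only if $\delta$ and $K_1$ are finite and, in the case $C = 2\delta+1$, $\delta$ is even.

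For the forward direction I would argue as follows. If a valid $M$ exists, then $M$ is a finite distance in $[\delta]$, and the inequality $K_1 \le M$ forces $K_1$ to be finite, while $\delta/2 \le M \le \delta$ forces $\delta$ to be finite. In the boundary case $C = 2\delta+1$ one computes $(C-\delta-1)/2 = \delta/2$, so the two bounds collapse and $M$ is pinned to $M = \delta/2$; since $M$ is an integer, $\delta$ must be even. This direction is routine.

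The reverse direction is the main work, and the case analysis of Table \ref{Table:admissible} is where the real content lies. Since $K_1$ is finite, Case (a) is excluded and we are in Case (b) or Case (c); I would also use the relation $K_1 \le K_2$, which holds for admissible sequences (it is forced by the existence of allowable odd-perimeter triangles). In Case (b), substituting $C = 2K_1+2K_2+1$ turns the upper bound into $K_1+K_2-\delta/2$, and the inequalities $K_1 \le K_2$ together with $K_1 + K_2 \ge \delta$ (the latter from $C \ge 2\delta+1$) show the interval is nonempty. The delicate point is integrality of an endpoint when $\delta$ is odd, where one needs the strict improvement $K_1 + K_2 \ge \delta + 1$. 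This is exactly supplied by the hypothesis: $\delta$ odd together with the standing bound $C \ge 2\delta+1$ and the condition ``$C = 2\delta+1 \Rightarrow \delta$ even'' forces $C > 2\delta+1$, hence $K_1 + K_2 > \delta$. In Case (c) I would use $3K_2 \ge 2\delta$ (giving $2K_2 \ge \delta$, so $\lceil \delta/2\rceil \le K_2$) together with $C > 2\delta + K_1$ to bound $(C-\delta-1)/2$ from below and again exhibit an integer in the interval, treating the possibility $C = \infty$ as a separate, easier subcase in which the upper bound is simply $K_2$.

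Finally I would dispose of the Henson side condition: when $\mathcal{S}$ contains a constraint in which $\delta$ occurs, so that we must produce $M < \delta$, I would check that the interval always contains an integer strictly below $\delta$. For $\delta \ge 2$ the candidate $\lceil \delta/2\rceil$ already satisfies $\lceil \delta/2\rceil < \delta$ and lies in the interval by the estimates above, so the constraint costs nothing. I expect the main obstacle to be the bookkeeping in Case (c), in particular tracking the subsidiary conditions governed by $C'$, handling the infinite-$C$ possibility, and ensuring integrality of the endpoint $(C-\delta-1)/2$ across the various parity combinations. By contrast, the role of the parity hypothesis is transparent, since it is precisely what rescues the odd-$\delta$ boundary of Case (b) and the collapse $M = \delta/2$ at the $C = 2\delta+1$ boundary.
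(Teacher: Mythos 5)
Your overall strategy---reduce to the arithmetic condition of Lemma \ref{lemma:plusMclosed} and then check it case by case against Table \ref{Table:admissible}, with the candidate $M=\max(K_1,\lceil\delta/2\rceil)$ implicit in your interval argument---is exactly the paper's, and your forward direction and the numerical estimates in Cases (b) and (c) are sound, including the observation that $C>2\delta+1$ rescues integrality at the odd-$\delta$ boundary of Case (b).

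The gap is in your final paragraph, on the Henson side condition. You assert that when some $H\in\mathcal{S}$ involves the distance $\delta$, the candidate $\lceil\delta/2\rceil$ ``lies in the interval by the estimates above,'' so that an $M<\delta$ is always available. But the lower endpoint of the interval is $\max(K_1,\delta/2)$, so $\lceil\delta/2\rceil$ is a legitimate choice only when $K_1\le\lceil\delta/2\rceil$; when $K_1>\lceil\delta/2\rceil$ the smallest usable value is $M=K_1$, and in the extreme case $K_1=\delta$ (which admissibility permits in Case (c)) the interval collapses to $\{\delta\}$ and no $M<\delta$ exists at all. At that point the strategy ``exhibit an $M$ strictly below $\delta$'' cannot succeed, and the equivalence would fail unless something else intervenes. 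What intervenes---and what the paper actually argues---is that $M=\delta$ forces $K_1=\delta<\infty$, and the admissibility conditions for $K_1=\delta$ then rule out any Henson constraint in which the distance $\delta$ occurs (in Case (b) only $1$-cliques are permitted as Henson constraints, and in Case (c) no Henson constraints are permitted), so the side condition of Lemma \ref{lemma:plusMclosed} is vacuous in the one situation where $M=\delta$ is unavoidable. Your writeup is missing this step, and the claim it currently rests on is false as stated.
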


\begin{proof}
We refer to the conditions on $M$ given in the previous lemma.

Suppose first a suitable parameter $M$ exists. The conditions $M \geq K_1,\delta/2$ imply that $K_1$ and $\delta$ are both finite. If $C = 2\delta +1$, the conditions $\delta/2 \leq M \leq (C-\delta-1)/2$ imply that $M = \delta/2$ and $\delta$ is even.

Conversely, with $\delta$ and $K_1$ finite, we use the minimum value $$M = \max(K_1,\lceil \delta/2 \rceil).$$ So we first require the numerical conditions $$\max(K_1,\lceil \delta/2 \rceil) \leq \min(K_2,(C-\delta-1)/2).$$ We know that $K_1 \leq K_2$ by definition, and $2\lceil \delta/2 \rceil \leq \delta + 1\leq C - \delta -1$ unless $C = 2\delta +1$, and in this case as $\delta$ is even, the required inequality still holds. The other two inequalities required are
\begin{align*}
    2 K_1 + \delta +1 &\leq C\\
    \lceil \delta/2 \rceil \leq K_2.
\end{align*}
Here one must examine the conditions on admissible parameters in detail. There are three cases of admissible parameters, the first of which has already been excluded. We give the conditions which separate these three cases along with some of the relevant side conditions which apply in each case.

\begin{enumerate}[label=\Roman*]
\item $K_1 = \infty$;
\item\label{item:II} $K_1 < \infty$, $C = 2K_1 + 2K_2 + 1 \leq 2\delta + K_1$; and $K_1 + K_2 \geq \delta$;
\item\label{item:III} $K_1 \leq \infty$, $C > 2\delta + K_1$; and $\delta \leq (3/2)K_2$.
\end{enumerate}

In case (\ref{item:II}), we have
\begin{align*}
    C = 2K_1 + 2K_2 + 1 &\geq 2K_1 + (K_1 + K_2) + 1 \geq 2K_1 + \delta + 1\\
    \delta &\leq K_1 + K_2 \leq 2K_2
\end{align*}
so the relevant inequalities hold in this case.

In case (\ref{item:III}), we have
\begin{align*}
    C \geq 2\delta + K_1 + 1 \geq \delta + 2K_1 + 1\\
    \delta/2 \leq (3/4) K_2 \leq K_2
\end{align*}
and again the relevant inequalities hold.

This disposes of the purely numerical constraints. The final point to check is the following: if $M = \delta$, then $\mathcal{S}$ does not contain a Henson constraint in which the distance $\delta$ occurs. By our choice of $M$, this would mean $$K_1 = \delta < \infty.$$
The characterization of admissibility in such cases implies that no Henson constraint in $\mathcal{S}$ involves the distance $\delta$. More precisely, in case (\ref{item:II}), $1$-cliques are allowed as Henson constraints, and in case (\ref{item:III}), no Henson constraints are allowed.

Thus all conditions are verified and the lemma follows.
\end{proof}

We may now apply the general theory to prove Theorem \ref{thm:genPolyAlg}.

\begin{proof}[Proof of Theorem \ref{thm:genPolyAlg}]
Our hypotheses on the parameters are those necessary for the application of Lemma \ref{lemma:plusMclosedAndParams}. So we have a value $M$ for which the associated class of finite structures is closed under $+_M$. Therefore by Lemma \ref{lemma:plusEisDecompOp}, the operator $+_M$ provides a free decomposition operator for the class.

By Cameron's criterion Theorem \ref{lemma:Cam:polyAlg}, the associated algebra is polynomial.

The indecomposable elements are those which are connected after deleting edges with weight $M$. For any $n$, any configuration consisting of a point $a$ and $n$ neighboring points is in $\mathcal{A}^{\delta}_{K_1,K_2,C,C',\mathcal{S}}$ and is connected with respect to weight $1$ edges. So there are infinitely many indecomposable isomorphism types and therefore the polynomial algebra has infinitely many generators.
\end{proof}

\subsection{The bipartite antipodal case}

We now examine one of the cases not covered by Theorem \ref{thm:genPolyAlg}.

\begin{lemma}\label{lemma:bipAnt}
Let $\Gamma$ be the generic bipartite antipodal graph of diameter $3$, with $G = \Aut(\Gamma)$. Then the associated algebra $\mathcal{A}^G$ is a polynomial algebra in three variables.

Furthermore, the associated class $\mathscr{C}$ has a free decomposition operator.
\end{lemma}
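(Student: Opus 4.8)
The plan is to describe the age $\mathscr{C}$ of $\Gamma$ explicitly and then build a decomposition operator by hand, since the hypotheses of Theorem \ref{thm:genPolyAlg} genuinely fail here: the bipartite case has $K_1 = \infty$, so no $+_M$ is available and the general machinery does not apply. First I would analyze which triangles embed, using Definition \ref{defn:Ks} together with the antipodal condition (a vertex has a \emph{unique} point at distance $\delta = 3$, so no vertex has two others at distance $3$). Bipartiteness kills all odd-perimeter triangles, and the only survivors are $(1,1,2)$, $(2,2,2)$, and $(1,2,3)$. From this I would deduce that the distance-$2$ relation is a transitive relation with at most two classes, so every member of $\mathscr{C}$ splits into two \emph{blocks} $A,B$ with all internal distances $2$ and all cross distances in $\{1,3\}$, the distance-$3$ pairs forming a partial matching between $A$ and $B$. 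Since $\Gamma$ is the generic (Henson-constraint-free) such graph, its age is exactly the class of all finite metric spaces of this shape, and the isomorphism type of a member is captured by the unordered pair of block sizes $\{a,b\}$ together with the matching size $m \le \min(a,b)$.

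Next I would define the operator $+$ on $\mathscr{C}$ by $(\{a_1,b_1\},m_1)+(\{a_2,b_2\},m_2) = (\{a_1+a_2,\,b_1+b_2\},\, m_1+m_2)$, each pair written in nonincreasing order; structurally this is the disjoint union in which the two larger blocks are fused into one distance-$2$ class and the two smaller blocks into the other, vertices in the same fused block sitting at distance $2$ and vertices in different fused blocks at distance $1$, with the two matchings retained disjointly at distance $3$. Functoriality and additivity are immediate from this description. For unique decomposition I would exhibit the monoid isomorphism $(\mathscr{C},+)\cong(\mathbb{Z}_{\ge 0}^3,+)$ given by $(\{a,b\},m)\mapsto(a-b,\,b-m,\,m)$ with $a\ge b$; this shows $(\mathscr{C},+)$ is free on three indecomposables, namely the single vertex $g_1$ (size $1$), the distance-$1$ edge $g_2$ (size $2$), and the distance-$3$ edge $g_3$ (size $2$). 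Note that $g_1+g_1$ is the distance-$2$ edge, so the three edge types are correctly accounted for.

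Freeness is the crux and the step I expect to be the main obstacle. I would invoke the canonical order of the Remark following Definition \ref{defn:CamCri}, the transitive closure of the relation $B\leqP A$ meaning $B=B_1+\cdots+B_k$ for some partition of $A$ into induced substructures; then $\sum_i B_i \le A$ holds by construction, and the real content is to verify that this transitive closure is antisymmetric, hence an honest partial order. For a partition of $A$ (with blocks $A,B$ and matching $m$) into induced pieces $B_i$, the bipartition of each $B_i$ is inherited from $A$; writing $a_i=|A\cap B_i|$, $b_i'=|B\cap B_i|$, the matching of $\sum_i B_i$ can only lose pairs split across pieces, so $m(\sum_i B_i)\le m$, while the block imbalance of $\sum_i B_i$ equals $\sum_i |a_i-b_i'| \ge |\sum_i(a_i-b_i')| = a-b$. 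Thus the pair $(\,m,\ b-a\,)$ decreases strictly in lexicographic order under any step that actually changes the type, and is unchanged only when $\sum_i B_i = A$; this rules out nontrivial cycles and gives antisymmetry, so $+$ is a free decomposition operator for $\mathscr{C}$.

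Finally, I would apply Cameron's criterion, Theorem \ref{lemma:Cam:polyAlg}: $\mathcal{A}^G$ is the polynomial algebra whose generators correspond to the isomorphism types of indecomposable members of $\mathscr{C}$. Since there are exactly three such types, $g_1,g_2,g_3$, this is a polynomial algebra in three variables, proving both assertions of the lemma. The anticipated difficulties are all concentrated in the structure theory of the age (the forbidden-triangle bookkeeping and the reduction to the block-plus-matching description) and in the antisymmetry argument for freeness; the monoid computation and the final appeal to Theorem \ref{lemma:Cam:polyAlg} are routine once these are in place.
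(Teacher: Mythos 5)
Your proposal is correct and follows essentially the same route as the paper: you encode each member of the age by its two block sizes and matching number (the paper's triple $(k,m,n)$), define the sum by adding these invariants pointwise, identify the free commutative monoid on the three generators (single vertex, distance-$1$ edge, distance-$3$ edge), and certify freeness with a partial order that tracks matching number and block imbalance at fixed total size --- which is exactly the order $(k_1,m_1,n_1)\leq(k_2,m_2,n_2)$ used in the paper --- before invoking Theorem \ref{lemma:Cam:polyAlg}. The only differences are presentational: you derive the block-plus-matching description explicitly from the forbidden triangles and phrase freeness as antisymmetry of the canonical order, whereas the paper states the bijection with triples directly and specifies the partial order outright.
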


\begin{proof}
If $A$ is a finite bipartite antipodal graph with parts $A_1,A_2$, let $$\alpha(A) = (k,m,n)$$ where $m = \min(|A_1|,|A_2|)$, $n = \max(|A_1|,|A_2|)$, and $k$ is the number of antipodal pairs in $A$.

\begin{claimlemma}
The function $\alpha$ induces a bijection between the set of isomorphism types of bipartite finite antipodal graphs and the set $S$ of triples $(k,m,n)$ satisfying $k \leq m \leq n$.
\end{claimlemma}

We define a map $\beta$ from $S$ to bipartite finite antipodal graphs by setting $\beta(k,m,n) = (V_1,V_2)$ with $|V_1| = m$, $|V_2| = n$, $d(a_i,b_i) = 3$ for $a_i,b_i$ which are $k$ elements of $V_1,V_2$, respectively, and remaining distances $1$ between $V_1$ and $V_2$ and $2$ within $V_1$ or $V_2$.

Then $\alpha \circ \beta$ is the identity on $S$. We claim that $\beta \circ \alpha$ is also the identity.

If $\alpha(A) = (k,m,n)$, then we may suppose that $|A_1| = m$, $|A_2 = n$. Since there are exactly $k$ pairs $(a_i,b_i)$ at distance $3$, with $a_i \in A_1$ and $b_i \in B_2$, clearly $A \simeq B$. 

The claim follows.\\

Now $S$ is a semigroup under pointwise addition, and the elements $x = (0,0,1)$, $y = (0,1,1)$, $z = (1,1,1)$ are indecomposable. Any element $(k,m,n)$ may be written uniquely as 
$$k(1,1,1) + (m-k)(0,1,1) + (n-m)(0,0,1)$$ so the semigroup is freely generated by $x,y,z$.

We may transfer this semigroup structure to the age of $\Gamma$.

There is also a natural partial order $\leq$ on $S$ given by $(k_1,m_1,n_1) \leq (k_2,m_2,n_2)$ if $k_1 \leq k_2$, $m_1 \leq m_2$, and $m_1 + n_1 = m_2 + n_2$ (this last condition is unimportant but will hold in all cases of interest).

We transfer this partial order to the age of $\Gamma$ as well. Then the final assumption in Cameron's criterion (Definition \ref{defn:CamCri}) is that if $A$ is partitioned into induced substructures $B_1,\cdots,B_{\ell}$, with $\alpha(A) = (k,m,n)$ and $\alpha(B_i) = (k_i,m_i,n_i)$, then $$\left(\sum k_i, \sum m_i, \sum n_i\right) \leq (k,m,n),$$ i.e.,
\begin{align*}
    \sum k_i \leq k && \sum m_i \leq m && \sum m_i + \sum n_i = m + n.
\end{align*}
Clearly $\sum k_i \leq k$ by counting, and $m$ is the sum of terms $m_i$ with $m_i \leq n_i$, so the second inequality also holds. The final equality holds since the $B_i$ collectively partition $A$.

Thus Cameron's criterion applies, and the generators for $\mathcal{A}^G$ as a polynomial algebra correspond to the indecomposable elements $x,y,z$ of $S$.
\end{proof}

\subsection*{Open problems}
\begin{prob}
Is the associated algebra polynomial also in the remaining cases of the known metrically homogeneous graphs of generic type, namely the general case of bipartite graphs and of antipodal graphs of odd diameter?
\end{prob}

In such cases there is no operator of the form $+_M$ under which the class is closed, but as we have seen in Lemma \ref{lemma:bipAnt}, there may be a suitable free decomposition operator of another kind.

\begin{prob}
Let $\mathcal{F}$ be a finite set of finite connected graphs and suppose that there is an $\aleph_0$-categorical countable universal $\mathcal{F}$-free graph $\Gamma$. Is the associated algebra a polynomial algebra?
\end{prob}

To clarify, in this setting there is a \emph{canonical} $\aleph_0$-categorical countable universal $\mathcal{F}$-free graph (namely, the existentially complete one), and the question applies to this particular graph. Again Cameron's criterion suggests a natural approach to the problem.

\bibliographystyle{amsalpha}
\bibliography{References}\label{sec:biblio}

\end{document}